\pdfoutput=1 
\documentclass[11pt]{amsart}
\usepackage{mathtools,amssymb}
\usepackage{graphicx}
\usepackage{comment,overpic,epsfig}
\usepackage[usenames,dvipsnames]{color}
\usepackage[hyphens]{url}
\usepackage[pagebackref,linktocpage=true,colorlinks=true,linkcolor=RoyalBlue,citecolor=BrickRed,urlcolor=RoyalBlue]{hyperref}
\usepackage[msc-links,abbrev]{amsrefs}

\textwidth=5.5in
\textheight=8in
\oddsidemargin=0.5in
\evensidemargin=0.5in
\topmargin=.5in

\newcommand{\R}{\mathbf{R}}
\newcommand{\ol}{\overline}

\newcommand{\C}{\mathcal{C}}
\renewcommand{\tilde}{\widetilde}
\renewcommand{\S}{\mathbf{S}}
\renewcommand{\phi}{\varphi}
\renewcommand{\epsilon}{\varepsilon}
\renewcommand{\leq}{\leqslant}
\renewcommand{\geq}{\geqslant}

\DeclareMathOperator{\inte}{int}
\DeclareMathOperator{\conv}{conv}
\DeclareMathOperator{\relint}{relint}
\DeclareMathOperator{\ave}{ave}

\DeclareMathOperator{\Emb}{Emb}
\DeclareMathOperator{\Imm}{Imm}
\DeclareMathOperator{\length}{length}

\theoremstyle{plain}
\newtheorem{theorem}{Theorem}[section]
\newtheorem{proposition}[theorem]{Proposition}
\newtheorem{lemma}[theorem]{Lemma}
\newtheorem{corollary}[theorem]{Corollary}
\theoremstyle{remark}

\theoremstyle{definition}
\newtheorem{note}[theorem]{Note}

\begin{document}

\vspace*{-0.5in}
	
\title[Curves and knots of constant torsion]{$h$-Principles for curves and knots\\ of constant torsion}

\author{Mohammad Ghomi}
\address{School of Mathematics, Georgia Institute of Technology, Atlanta, Georgia 30332}
\email{ghomi@math.gatech.edu}
\urladdr{https://ghomi.math.gatech.edu}

\author{Matteo Raffaelli}
\address{School of Mathematics, Georgia Institute of Technology, Atlanta, Georgia 30332}
\email{raffaelli@math.gatech.edu}
\urladdr{https://matteoraffaelli.com}

\subjclass[2020]{Primary 53A04, 57K10; Secondary 58C35, 53C21}
\date{Last revised on \today}
\keywords{Convex integration, Isotopy of knots, Tantrix of curves, Elastic rods.}
\thanks{The first-named author was supported by NSF grant DMS-2202337.}

\begin{abstract}
We prove that curves of constant torsion satisfy the $\C^1$-dense $h$-principle in the space of immersed curves  in Euclidean space. In particular, there exists a knot of constant torsion in each isotopy class.  Our methods, which involve convex integration and degree theory, quickly establish these results for curves of constant curvature as well.
\end{abstract}

\maketitle

\section{Introduction}
Curves of constant torsion, which occur naturally as elastic rods, have long been studied \cite{koenigs1887,wiener1974, wiener1977, ivey2000,calini-ivey1998,bates-melko2013,ivey-singer1999,bray-jauregui2015,musso2001}, and some knotted examples have been found by various means. Here we construct  knots of constant torsion in every isotopy class by adapting the convex integration   \cite{gromov:PDR,spring1998,geiges2003} techniques developed  for curves of constant curvature \cite{ghomi2007-h,ghomi-raffaelli2024}.  
To state our main result, let $\Gamma$ be an interval $[a,b]\subset\R$  or topological circle $\R/((b-a)\textbf{Z})$, and $\C^\alpha(\Gamma,\R^3)$ be the space of $\C^\alpha$ curves $f\colon \Gamma\to\R^3$ with its standard norm 
$|\cdot|_\alpha$.  Let $\Imm^{\alpha\geq 1}(\Gamma,\R^3)\subset \C^\alpha(\Gamma,\R^3)$ consist of curves with speed $|f'|\neq 0$. If $|f'|=1$, the \emph{curvature} and \emph{torsion} of $f$ are given by $\kappa\coloneqq   |f''|$ and $\tau\coloneqq  \det(f', f'', f''')/\kappa^2$ respectively.

\begin{theorem}\label{thm:main}
Let $f\in\textup{Imm}^{\alpha\geq 4}(\Gamma,\R^3)$ be a unit speed curve with $\kappa$, $\tau>0$, and $p_i\in\Gamma$ be a finite collection of points. Then for  any $\epsilon>0$ there exists a unit speed curve $\tilde f\in\textup{Imm}^{\alpha-1}(\Gamma,\R^3)$ with $\tilde\kappa>0$ and $\tilde\tau=constant$ such that $|\tilde f- f|_1\leq\epsilon$ and $\tilde f$ is tangent to $f$ at $p_i$.
\end{theorem}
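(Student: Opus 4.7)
The plan is to apply convex integration directly to $f$ in the spirit of \cite{ghomi2007-h, ghomi-raffaelli2024}, but at the level of its second derivative to accommodate the third-order torsion constraint. After reparametrizing to $|f'|=1$, the condition $\tau = \tau_0$ becomes the algebraic relation $\det(f', f'', f''') = \tau_0 |f''|^2$ on the $3$-jet of $f$. I would set up the corresponding differential relation $\mathcal{R}\subset J^3(\Gamma,\R^3)$, and after rescaling choose the target $\tau_0$ so that the original $3$-jet of $f$ lies in the relative interior of the fiberwise convex hull of $\mathcal{R}$ over $J^1$. This is the ampleness prerequisite that enables convex integration in the principal direction $(f'',f''')$.

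I would then implement the perturbation as a high-frequency corrugation of the form $\tilde f(s) = f(s) + N^{-2}\Phi(s, Ns)$, where $\Phi$ is smooth and, in its second argument, periodic with mean zero. With amplitude $N^{-2}$ and frequency $N$ one has $|\tilde f - f|_1 = O(1/N)$, yielding the $C^1$-closeness, while $\tilde f''$ and $\tilde f'''$ pick up corrections of orders $O(1/N)$ and $O(1)$, respectively. Choosing $\Phi$ pointwise so that $(\tilde f''(s), \tilde f'''(s))$ traces a loop in the fiber $\mathcal{R}_{(f(s), f'(s))}$ with mean equal to the original $(f''(s), f'''(s))$, one achieves $\tilde\tau \equiv \tau_0$ pointwise. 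The $O(1/N)$ correction in $\tilde f''$ preserves the positivity $\tilde\kappa > 0$ for $N$ large, and tangency at the $p_i$ is enforced by a smooth cutoff of $\Phi$ near each of these points.

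For closed $\Gamma = \R/((b-a)\textbf{Z})$ one must additionally arrange $\int_\Gamma \tilde f'\,ds = 0$ along with periodic closure of the Frenet frame under the deformation. Following \cite{ghomi-raffaelli2024}, I would augment $\Phi$ with finitely many scalar parameters and invoke a Brouwer-type degree argument to choose them simultaneously so as to achieve closure. The principal technical obstacle is the ampleness verification: in contrast to the constant-curvature case, where the constraint $|f''|^2 = \kappa_0^2$ is radial and its convex hull fills a ball, here $\det(f',f'',f''') = \tau_0|f''|^2$ depends quadratically on $f''$ and cuts out a genuinely nonlinear hypersurface in the $3$-jet fiber. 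Showing that loops in this hypersurface can be chosen through any prescribed base jet with arbitrary prescribed mean — while avoiding the degenerate locus $|f''|\to 0$ where positivity of curvature would fail — is the central analytic difficulty, and where the bulk of the technical work would lie.
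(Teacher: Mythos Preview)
Your proposal has a genuine gap at its core: the constant-torsion relation is \emph{not ample} in the principal direction, and the workaround you suggest is not available. For a third-order relation $\mathcal{R}\subset J^3$, convex integration operates fiberwise over $J^2$, not over $J^1$. Fixing the $2$-jet $(f,f',f'')$, the fiber of $\{\det(f',f'',f''')=\tau_0|f''|^2\}$ is the affine hyperplane $\langle f''',B\rangle=\tau_0\kappa$, whose convex hull is itself; if $f'''$ is not already on it, no loop in this hyperplane can average to $f'''$. Concretely, with your corrugation the leading correction to $f'''$ is $\partial_\theta^3\Phi$, a periodic derivative with zero mean, so the induced change in $\tau$ also has zero $\theta$-mean and cannot shift $\tau(s)$ to a prescribed $\tau_0\neq\tau(s)$. (Incidentally, amplitude $N^{-2}$ gives an $O(1)$ correction to $f''$ and an $O(N)$ correction to $f'''$; the orders you quote correspond to amplitude $N^{-3}$.) Treating $(f'',f''')$ as a joint ``principal direction'' is not an option: if $\tilde f''=f''+\psi(s,Ns)$ with $\psi$ periodic and of mean zero in $\theta$, then $\tilde f'''-f'''=N\,\partial_\theta\psi+O(1)$ blows up. One cannot prescribe an $O(1)$ loop in $(f'',f''')$-space independently, because one coordinate is the derivative of the other.

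The paper sidesteps this obstruction by passing to the tantrix $T=f'\colon\Gamma\to\S^2$, where the torsion constraint becomes the \emph{second}-order condition $\tilde k\,\tilde v^{\,2}=c$ on $T$. A reparametrization lemma (solving a scalar ODE) shows that any spherical curve with $k>0$ admits a unique reparametrization with $\tilde k\,\tilde v^{\,2}$ constant. The perturbation is then done entirely at the tantrix level: one inserts small tangent loops to $T$ at finitely many points $v_i$ chosen (via Steinitz/Carath\'eodory) so that the original average $\ave_I(T)$ lies in $\inte(\conv\{v_i\})$, reparametrizes, and uses a Brouwer-type degree argument on the loop-length parameters to hit $\ave_I(\tilde T)=\ave_I(T)$ exactly. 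Integrating $\tilde T$ gives $\tilde f$ with $\tilde\tau\equiv c$, matching endpoints on each subinterval, and $|\tilde f-f|_1\leq\epsilon$. The pieces glue because the tantrix images agree near the partition points and the reparametrization is uniquely determined there. In short, the non-ampleness you flag as the ``principal technical obstacle'' is fatal for direct third-order convex integration; the paper's route through the tantrix is not a convenience but the mechanism that reduces the problem to a tractable order.
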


If $\epsilon$ is sufficiently small, then $h_t\coloneqq  (1-t)f+t \tilde f$, $t\in[0,1]$,  is a homotopy  in $\Imm^{\alpha-1}(\Gamma,\R^3)$. In the terminology of Gromov or Eliashberg \cite{gromov:PDR, cem2024}, this constitutes  a  $\C^1$-dense $h$-principle for curves of constant torsion. 
 Let $\Emb^\alpha(\Gamma,\R^3)\subset\Imm^\alpha(\Gamma,\R^3)$ be the space of injective  curves, which are called \emph{knots} when $\Gamma$ is a circle.  If $f\in\Emb^1(\Gamma,\R^3)$ and $\epsilon$ is sufficiently small, then $h_t$ is an isotopy. Thus, since  curves in $\Emb^{\infty}(\Gamma,\R^3)$ with $\kappa$, $\tau>0$ are dense in $\Emb^{1}(\Gamma,\R^3)$,  we obtain:

\begin{corollary}
Every knot  $f\in\Emb^1(\Gamma,\R^3)$ is isotopic in $\Emb^1(\Gamma,\R^3)$ to a knot $\tilde f\in\Emb^\infty(\Gamma,\R^3)$ with $\tilde\kappa>0$ and $\tilde\tau=constant$. 
\end{corollary}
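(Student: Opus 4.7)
The plan is to combine Theorem~\ref{thm:main} with the density statement recalled just before the Corollary, via a two-step ``approximate-then-apply'' argument. Given $f\in\Emb^1(\Gamma,\R^3)$, the strategy is first to move $f$ to a smooth knot $f_0$ with $\kappa_{f_0},\tau_{f_0}>0$ by a $C^1$-small perturbation, then to apply Theorem~\ref{thm:main} to $f_0$ to produce $\tilde f$ with $\tilde\kappa>0$ and $\tilde\tau$ constant, and finally to concatenate the two isotopies that arise along the way.

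Concretely, fix a threshold $\delta>0$ and use the cited density to select $f_0\in\Emb^\infty(\Gamma,\R^3)$ with $\kappa_{f_0},\tau_{f_0}>0$ and $|f_0-f|_1<\delta$. Since injectivity and non-vanishing of the speed are both $C^1$-open conditions, $\Emb^1(\Gamma,\R^3)$ is open in $\C^1(\Gamma,\R^3)$, so for $\delta$ sufficiently small the linear homotopy $s\mapsto (1-s)f+sf_0$ stays in $\Emb^1$, giving a first isotopy. Next, apply Theorem~\ref{thm:main} to $f_0$ (with $\alpha=\infty$, so $\alpha-1=\infty$, and with the interpolation set $\{p_i\}$ empty) with some $\epsilon>0$: this yields $\tilde f\in\Imm^\infty(\Gamma,\R^3)$ with $\tilde\kappa>0$, $\tilde\tau=\mathrm{const}$, and $|\tilde f-f_0|_1\leq\epsilon$. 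As noted in the paragraph immediately following Theorem~\ref{thm:main}, for $\epsilon$ small enough the linear homotopy $h_t=(1-t)f_0+t\tilde f$ is an isotopy in $\Emb^1$. Concatenating the two isotopies, and reparametrizing in $t$, closes the argument.

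The assembly is essentially formal once the openness thresholds $\delta$ and $\epsilon$ are chosen inside the $C^1$-neighborhoods of $\Emb^1$ around $f$ and $f_0$ respectively; this is immediate since embeddings form a $C^1$-open set. The one genuinely nontrivial input is the density of $C^\infty$ knots with $\kappa,\tau>0$ in $\Emb^1(\Gamma,\R^3)$: positive curvature is $C^2$-open and easily arranged, but enforcing $\tau>0$ pointwise on a possibly closed curve is subtler — one expects to argue by a small helical twist along a Bishop-type framing, which can raise the torsion uniformly at $C^1$-small cost. Since the paper invokes this density as a known fact, it should here be used as a black box; everything else is a routine gluing of isotopies.
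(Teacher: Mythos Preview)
Your proposal is correct and follows essentially the same two-step approach as the paper: first approximate $f$ in $\C^1$ by a smooth embedding $f_0$ with $\kappa,\tau>0$ using the stated density, then apply Theorem~\ref{thm:main} to $f_0$ with $\epsilon$ small, and concatenate the two linear isotopies afforded by the $\C^1$-openness of $\Emb^1$. The paper compresses this into the paragraph preceding the Corollary, invoking the density as a known fact exactly as you do.
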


Analogous results for curvature were established in \cite{ghomi2007-h}, see also \cite{wasem2016,ghomi-raffaelli2024} for related $h$-principles, and \cite{mcatee2007,koch:constant} for earlier constructions. As in  \cite{ghomi2007-h}, we prove Theorem \ref{thm:main} by reducing it to a problem for spherical curves (Propositions \ref{prop:loc} and \ref{prop:T}). More explicitly, we deform the \emph{tantrix} $T:=f'$ of $f$ to a longer spherical curve $\tilde T$ with $|\tilde T-T|_0\leq\epsilon$, and then integrate $\tilde T$ to obtain $\tilde f$. For $\tilde \tau$ to be constant, the product $\tilde k\tilde v$ must be constant (Lemma \ref{lem:reparam}), where $\tilde k$ is the geodesic curvature and $\tilde v$ is the speed of $\tilde T$. Furthermore, for $\tilde f$ to be tangent to $f$ at $p_i$ we need to have $\int \tilde T=\int T$ on every interval between $p_i$. We will show that these requirements can be met via basic convex geometry  together with degree theory (Lemma \ref{lem:F}), which makes the arguments significantly shorter than those in  \cite{ghomi2007-h}, although less explicit. 

Constructing submanifolds with prescribed tangential directions has been a major theme in $h$-principle theory, e.g., see \cite{ghomi2011} and references therein. In particular see \cite{ghomi:shadow,ghomi:shadowII} for more results and applications of curves with prescribed tantrices.

\begin{note}
Our methods also establish the analogue of Theorem \ref{thm:main} for curvature, with obvious simplifications since $\tilde T$ would only need to have constant speed. In particular Lemma \ref{lem:reparam} below is not needed. Furthermore, in Proposition \ref{prop:loc} we may replace the condition $\tilde \tau=c$ with $\tilde\kappa=c$, and in Proposition \ref{prop:T} replace $\tilde k\tilde v=c$ with $\tilde v=c$. The  proofs will then proceed along the same lines, with only some abbreviations.
\end{note}

\section{Reparametrization of the Tantrix}\label{sec:reparam}
We begin by constructing constant torsion curves with a prescribed tantrix image. 
Set $I\coloneqq   [a,b]$, and $|I|:=b-a$. Let $f\in\Imm^3(I,\R^3)$ be a curve with $|f'|=1$, and set $v\coloneqq   |T'|=\kappa$. If $v\neq 0$, then $T\in\Imm^2(I,\S^2)$, and 
$N\coloneqq   T'/v$, $B\coloneqq   T\times N$ generate the Frenet frame $(T,N,B)$. Then we may compute that
$$
\tau=\langle N',B\rangle=\frac{\langle v  T'' - v' T',B\rangle}{v^{2}}=\frac{\langle T''-v'N, B\rangle}{v}= \frac{\langle T'', B\rangle}{v}=k v,
$$
where $ k\coloneqq   \langle T'', B\rangle/v^2$ is the geodesic curvature of $T$.  We say  $\tilde T$ is a \emph{reparametrization} of $T$ if $\tilde T=T\circ\phi$ for an increasing diffeomorphism $\phi\colon I\to I$.  Standard ODE theory yields:

\begin{lemma}\label{lem:reparam}
Let $T\in \textup{Imm}^{\alpha\geq 3}(I,\mathbf{S}^{2})$ be a curve with $k> 0$. Then $T$ admits a  unique reparametrization $\tilde T=T\circ\phi\in \textup{Imm}^{\alpha-1}(I,\mathbf{S}^{2})$ such that
$
\tilde k\, \tilde v
$
is constant. Furthermore, the mapping $T\to\tilde T$ is continuous.
\end{lemma}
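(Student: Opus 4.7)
The plan is to reduce the condition $\tilde k\,\tilde v^{\,2}=c$ to a first-order ODE for the reparametrization function $\phi$, solve it explicitly by separation of variables, and use the boundary conditions $\phi(a)=a$, $\phi(b)=b$ to pin down the constant $c$ uniquely.

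First I would record how the relevant quantities transform under a reparametrization $\tilde T=T\circ\phi$. Since geodesic curvature is invariant under orientation-preserving reparametrization, $\tilde k=k\circ\phi$, while the chain rule gives $\tilde v=(v\circ\phi)\,\phi'$. Hence the condition $\tilde k\,\tilde v^{\,2}=c$ becomes
\[
(k\circ\phi)\,(v\circ\phi)^{2}\,(\phi')^{2}=c,
\]
and since $k,v>0$, this is equivalent to
\[
\phi'(t)=\frac{\sqrt{c}}{v(\phi(t))\sqrt{k(\phi(t))}}.
\]
This is separable: rewriting it as $v(\phi)\sqrt{k(\phi)}\,d\phi=\sqrt{c}\,dt$ and integrating from $a$ with the natural normalization $\phi(a)=a$ yields the implicit relation
\[
G(\phi(t))=\sqrt{c}\,(t-a),\qquad G(s)\coloneqq \int_{a}^{s}v(u)\sqrt{k(u)}\,du.
\]

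Next I would determine $c$ and verify that $\phi$ is a diffeomorphism of $I$. The requirement $\phi(b)=b$ forces $G(b)=\sqrt{c}\,(b-a)$, so
\[
c=\left(\frac{1}{|I|}\int_{a}^{b}v\sqrt{k}\,du\right)^{2}
\]
is uniquely determined by $T$. Since $v>0$ and $k>0$, the function $G$ is strictly increasing on $I$, so $G^{-1}$ exists, and setting $\phi(t)\coloneqq G^{-1}(\sqrt{c}\,(t-a))$ gives a well-defined increasing bijection of $I$ onto itself. Regularity: $T\in\C^{\alpha}$ gives $v\in\C^{\alpha-1}$ and $k\in\C^{\alpha-2}$, hence the integrand $v\sqrt{k}$ lies in $\C^{\alpha-2}$ and $G\in\C^{\alpha-1}$; inverting and composing shows $\phi\in\C^{\alpha-1}$, so $\tilde T=T\circ\phi\in\Imm^{\alpha-1}(I,\mathbf{S}^{2})$ as required.

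Finally, for uniqueness I would observe that any reparametrization $\tilde T=T\circ\phi$ with $\tilde k\,\tilde v^{\,2}$ equal to some constant must satisfy the separable ODE above with $\phi(a)=a$ and $\phi(b)=b$; the first boundary condition together with the ODE determines $\phi$ uniquely in terms of $c$, and the second then forces $c$ to equal the value found above. The main point to be careful about is really just the bookkeeping: ensuring the hypotheses $k>0$ and $v>0$ are used to guarantee strict monotonicity of $G$ (hence the existence of $\phi$) and to keep the ODE non-degenerate, and tracking the loss of exactly one derivative. No fixed-point or perturbation argument is needed since the ODE is solved in closed form.
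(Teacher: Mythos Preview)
Your proof is correct and starts from the same reduction as the paper: both use $\tilde k=k\circ\phi$ and $\tilde v=(v\circ\phi)\phi'$ to arrive at the first-order ODE $\phi'=\sqrt{c}/\big(v(\phi)\sqrt{k(\phi)}\big)$ with $\phi(a)=a$. The paper then extends $v,k$ to Lipschitz functions on $\R$, invokes Picard--Lindel\"of to produce a unique $\phi_c$ for each $c$, and finds the correct $c$ by a shooting argument (continuity and monotonicity of $c\mapsto\phi_c(b)$). You instead solve the ODE explicitly by separation of variables, obtaining $\phi=G^{-1}\big(\sqrt{c}\,(\cdot-a)\big)$ with $G(s)=\int_a^s v\sqrt{k}$, and read off $c=(G(b)/|I|)^{2}$ directly from the boundary condition $\phi(b)=b$. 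Your route is a bit more elementary---it avoids the extension step and the ODE existence theorem, and it produces a closed-form value of $c$ in terms of $T$ alone---while the paper's argument is phrased so as to apply without change to autonomous ODEs that are not separable. The regularity count ($v\in\C^{\alpha-1}$, $k\in\C^{\alpha-2}$, hence $G,\phi\in\C^{\alpha-1}$) is the same in both.
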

\begin{proof}
For $c>0$, the equation
$
\tilde k\, \tilde v=c
$
may be rewritten as 
$
(v\circ\phi)\varphi' = c/( k \circ \varphi)
$
by the chain rule and  invariance of geodesic curvature. 
Since $\alpha\geq 3$, $v$ and $ k$ are Lipschitz, and may be extended to Lipschitz functions on $\R$ without loss of regularity. So we arrive at the  initial value problem
\begin{equation*}
\begin{cases}
\varphi' = F_c(\phi),\\
\varphi(a)=a;
\end{cases}
\quad\quad\text{where}\quad\quad
F_c(\cdot)\coloneqq   \frac{c}{v(\cdot)k(\cdot)}.
\end{equation*}
Since  $F_c\colon\R\to\R$ is Lipschitz, for every $c$ there exists a unique solution  $\phi_c\colon I\to\R$ by the Picard--Lindel\"{o}f theorem. Since $F_c$ is $\C^{\alpha-2}$, $\phi_c$ is $\C^{\alpha-1}$, and since $\varphi_c'\neq 0$,  $\varphi_c$ is a diffeomorphism onto its image. Note that $c\mapsto \phi_c(b)$ is a continuous monotonic function since $\phi_c$ depends continuously on $c$ and $F_c$ varies monotonically with $c$. Furthermore, $\phi_c(b)$ can be made arbitrarily small or large along with $F_c$.
Hence $\phi_{c_0}(b)=b$ for a unique $c_0$, which yields the desired reparametrization. Finally, $\phi$ depends continuously on $F_c$, which in turn depends continuously on $T$. Thus $\phi$ depends continuously on $T$, and so $\tilde T$ depends continuously on $T$.
\end{proof}

Suppose now that $f\in\Imm^{\alpha\geq 4}(I,\R^3)$ is a curve with $\kappa$, $\tau>0$ and tantrix $T$. Then $T\in\Imm^{\alpha-1}(I,\S^2)$ with geodesic curvature $k=\tau/\kappa>0$. So we may apply the above lemma to obtain the reparametrization 
$\tilde T\in\Imm^{\alpha-2}(I,\S^2)$. Then 
\begin{equation}\label{eq:tildef}
\tilde f(t)\coloneqq   f(a)+\int_a^t\tilde T(u)du
\end{equation}
is a $\C^{\alpha-1}$ curve of constant torsion $c$ with $\tilde T(I)=T(I)$. Since $c=\tilde k\tilde v=(k\circ\phi)\tilde v$, and $L:=\length(T)=\length(\tilde T)=\int_I\tilde v$, we obtain the following estimate:
\begin{equation}\label{eq:c}
\frac{L}{|I|}\,\textup{min}_{I}(k)\;\;\leq\;\;c=\frac{L}{|I|\,\ave_{I}\big(1/(k\circ\phi)\big)}\;\;\leq\;\; \frac{L}{|I|}\,\textup{max}_{I}(k),
\end{equation}
where $\ave_I (\cdot)\coloneqq\int_I(\cdot)/|I|$.

\section{Reduction to Spherical Curves}\label{sec:spherical}

Here we use tantrices to reduce Theorem \ref{thm:main} to a problem for spherical curves. First we show that Theorem \ref{thm:main} follows from a more geometric local result. We say that a constant $c$ is \emph{arbitrarily large} if it can be chosen from an interval $[c_0,\infty)$.

\begin{proposition}\label{prop:loc}
Let $ f\in\textup{Imm}^{\alpha\geq 4}(I,\R^3)$ be a curve with $\kappa$, $\tau> 0$ and $V$ be an open neighborhood of $T(I)$ in $\S^2$. Then  there exists a unit speed curve $\tilde f\in\textup{Imm}^{\alpha-1}(I,\R^3)$ with $\tilde\kappa>0$ and $\tilde\tau=c$, for $c$ arbitrarily large, such that $\tilde T(I)\subset V$, $ f=\tilde f$ on $\partial I$,  and $T(U)=\tilde T(\tilde U)$ for some open neighborhoods $U$, $\tilde U$ of $\partial I$ in $I$. 
\end{proposition}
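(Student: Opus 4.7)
The plan is to reduce Proposition \ref{prop:loc} to a purely spherical statement (Proposition \ref{prop:T}) that constructs the tantrix of $\tilde f$, and then to recover $\tilde f$ by integration. Since the hypotheses and conclusion are invariant under orientation-preserving reparametrizations of $f$ that fix $\partial I$, I would first assume $|f'|=1$, so that $T:=f'$ lies in $\textup{Imm}^{\alpha-1}(I,\S^2)$ with geodesic curvature $k=\tau/\kappa^{2}>0$ by the computation in Section \ref{sec:reparam}.

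I would then apply Proposition \ref{prop:T} to $T$ to produce a spherical curve $\tilde T\in\textup{Imm}^{\alpha-2}(I,\S^2)$ such that: $\tilde T(I)\subset V$; $T(U)=\tilde T(\tilde U)$ for some open neighborhoods $U,\tilde U$ of $\partial I$ in $I$; $\int_I\tilde T=\int_I T$; and $\tilde k\tilde v^{\,2}=c$ is constant, with $c$ chosen arbitrarily large. With $\tilde T$ in hand, define $\tilde f$ by \eqref{eq:tildef}. Then $\tilde f\in\textup{Imm}^{\alpha-1}(I,\R^3)$ is unit-speed since $|\tilde T|=1$, its tantrix is $\tilde f'=\tilde T$, its curvature is $\tilde\kappa=\tilde v>0$, and its torsion is $\tilde\tau=\tilde k\tilde v^{\,2}=c$ by Section \ref{sec:reparam}. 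The remaining conclusions are immediate: $\tilde T(I)\subset V$ and $T(U)=\tilde T(\tilde U)$ are built in; $\tilde f(a)=f(a)$ by the integration constant; and $\tilde f(b)=f(a)+\int_I\tilde T=f(a)+\int_I T=f(b)$ by the integral matching.

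The main obstacle is not this reduction but Proposition \ref{prop:T}. To make $c$ arbitrarily large one must, by \eqref{eq:c}, make $\tilde T$ arbitrarily long while keeping its geodesic curvature bounded below---and simultaneously keep its image in a prescribed neighborhood of $T(I)$, enforce the integral identity $\int_I\tilde T=\int_I T$, and arrange for $\tilde k\tilde v^{\,2}$ to be constant. The integral constraint is the delicate one: wiggling $T$ to add length will generically perturb $\int_I T$, and correcting this requires drawing the wiggles from a sufficiently flexible family and applying a degree-theoretic argument (Lemma \ref{lem:F}), which the introduction identifies as the technical heart of the paper.
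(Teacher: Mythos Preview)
Your reduction is essentially the paper's: apply Proposition \ref{prop:T} with $x_0=\ave_I(T)$, integrate the resulting $\tilde T$ via \eqref{eq:tildef}, and read off the conclusions. The one hypothesis you overlook is that Proposition \ref{prop:T} requires $T$ to be \emph{nonflat}, i.e., $\inte(\conv(T))\neq\emptyset$; this is not automatic from $\kappa,\tau>0$ (a circular helix, for instance, has its tantrix lying on a planar circle). The paper handles this by first perturbing $f$ on a compact subset of the interior of $I$ so that $T$ becomes nonflat---a harmless move since it leaves the boundary data untouched and keeps $T(I)\subset V$. With that step added, your argument matches the paper's.
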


Proposition~\ref{prop:loc} implies Theorem~\ref{thm:main} as follows. Let $I_i$ be a partition of $\Gamma$ into intervals such that $\partial I_i$ include the prescribed points $p_j$. Choose $I_i$ so small that $T(I_i)$ lies in the interior $V_i$ of a disk  of radius $\epsilon/2$ in $\S^2$. Applying Proposition~\ref{prop:loc} to $f_{i}:=f|_{I_{i}}$, we obtain $\C^{\alpha-1}$ curves $\tilde{f}_{i}$ with $\tilde \tau_i=c_i$, $\tilde{f}_{i}=f_{i}$ on $\partial I_{i}$, $\tilde T_i(I_i)\subset V_i$, and $T_{i}(U_{i}) = \tilde{T}_{i}(\tilde{U}_{i})$ for open neighborhoods $U_{i}, \tilde{U}_{i}$ of $\partial I_{i}$ in $I_{i}$. Note that since $c_i$ are arbitrarily large, they may assume the same value $c$. Define $\tilde f\colon\Gamma\to\R^3$ by setting $\tilde{f} \coloneqq  \tilde{f}_{i}$ on $I_{i}$. Then $\tilde{f}$ is $\C^{0}$, since $\tilde{f} = f$ on $\partial I_{i}$. Furthermore, since $\tilde{f}_{i}'=\tilde{T}_{i}$ and $\tilde{T}_{i} = T_{i}$ on $\partial I_{i}$, $\tilde{f}$ is $\C^{1}$, and so $\tilde T=\tilde f'$ is well-defined. Note that the condition $T_{i}(U_{i}) = \tilde{T}_{i}(\tilde{U}_{i})$ yields open neighborhoods $W_i$ and $\tilde W_i$ of $\partial I_i$ in $\Gamma$ such that $\tilde{T}(\tilde W_i) = T(W_i)$. Since $\tilde{T}|_{I_i}=\tilde T_i=\tilde f_i'$ is $\C^{\alpha-2}$, it follows that $\tilde T$ is a reparametrization of $T$ with speed $c/k$ near $\partial I_i$. So, by Lemma \ref{lem:reparam}, $\tilde{T}$ is $\C^{\alpha-2}$, or $\tilde f$ is $\C^{\alpha-1}$. Thus, $\tilde\tau$ is well-defined and is equal to $c$. Finally, since $\tilde T(I_i)\subset V_i$, we have $|\tilde T-T|_0\leq\epsilon$, which yields $|\tilde f-f|_1\leq\epsilon$, as desired.
 
It remains then to prove Proposition \ref{prop:loc}, which we reduce in turn to a more basic result.
Let $\conv(f)$ denote the convex hull of $f$. 
Note that $\ave_I(f)$  lies in the relative interior $\relint \conv(f)$ of $\conv(f)$ \cite[Lem.~2.1]{ghomi2007-h}, i.e., the interior of $\conv(f)$ within the affine hull of $f$. We say that $f$ is \emph{nonflat} provided that $\conv(f)$ has interior points, or $\inte(\conv(f))\neq\emptyset$. Note that $\relint \conv(f) = \inte \conv(f)$ when $f$ is nonflat.

\begin{proposition}\label{prop:T}
Let $T\in\textup{Imm}^{\alpha\geq 3}(I,\S^2)$ be a nonflat curve with $k> 0$, $V\subset\S^2$ be an open neighborhood of $T(I)$, and $x_0\in\inte(\conv(T))$. Then there exists a curve $\tilde T\in\textup{Imm}^{\alpha-1}(I,\S^2)$ with $\tilde  k\tilde v=c$, for $c$ arbitrarily large, $\tilde T(I)\subset V$, $T(U)=\tilde T(\tilde U)$ for some open neighborhoods $U$, $\tilde U$ of $\partial I$ in $I$, and $\ave _I(\tilde T)=x_0$. 
\end{proposition}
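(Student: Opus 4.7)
The approach is to reduce Proposition~\ref{prop:T} to a centroid equation via Lemma~\ref{lem:reparam}, and then solve it using convex geometry and degree theory. For any $T_0\in\Imm^{\alpha\geq 3}(I,\S^2)$ with $k_0>0$, Lemma~\ref{lem:reparam} produces $\tilde T_0=T_0\circ\phi$ with $\tilde k\tilde v^{\,2}=c$ and $\phi'=\sqrt{c/k_0(\phi)}/v_0(\phi)$. Substituting this into $\ave_I(\tilde T_0)=|I|^{-1}\int_I T_0(\phi)\,dt$ and changing variables $u=\phi(t)$ yields
\begin{equation*}
\ave_I(\tilde T_0) \;=\; \frac{\int_I T_0\,v_0\sqrt{k_0}\,du}{\int_I v_0\sqrt{k_0}\,du},
\qquad
\sqrt{c}\;=\;\frac{1}{|I|}\int_I v_0\sqrt{k_0}\,du.
\end{equation*}
Thus $\ave_I(\tilde T_0)$ is the $\sqrt{k_0}$-weighted arc-length centroid of $T_0$, and $c$ grows with the total $\sqrt{k_0}$-arc-length. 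The proposition therefore reduces to constructing a perturbation $T_*$ of $T$ with image in $V$, equal to $T$ near $\partial I$, with $k_*>0$, arbitrarily large total $\sqrt{k_*}$-arc-length, and $\sqrt{k_*}$-weighted centroid equal to $x_0$.

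Since $x_0\in\inte(\conv(T(I)))$, Carath\'eodory in $\R^3$ yields four distinct interior parameters $t_1,\dots,t_4\in I$ such that $T(t_1),\dots,T(t_4)$ are affinely independent and $x_0\in\inte(\conv\{T(t_1),\dots,T(t_4)\})$. For weights $s=(s_1,\dots,s_4)$ in the standard $3$-simplex $\Delta$ and a large parameter $M>0$, define $T_s$ by splicing into $T$ on a tiny neighborhood of each $t_i$ a \emph{detour} $D_{i,s_i}$: a $\C^\alpha$ spherical arc contained in an arbitrarily small disk about $T(t_i)$ (so $T_s(I)\subset V$), with positive geodesic curvature, depending continuously on $s_i$, and carrying $\int_{D_{i,s_i}}\sqrt{k}\,ds = s_iM$. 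Concretely such detours can be modelled on a thin spherical spiral with a large turning number $n$ and radius $r\to 0$ tuned so that $n\sqrt{r}=s_iM$; the splicing is smoothed so as to preserve $\C^\alpha$ regularity and $k>0$. By construction $T_s=T$ outside tiny neighborhoods of the $t_i$, hence $T_s=T$ near $\partial I$.

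Now set $\Phi(s)\coloneqq\ave_I(\tilde T_s)$, where $\tilde T_s$ is obtained from $T_s$ via Lemma~\ref{lem:reparam}. By the first paragraph,
\begin{equation*}
\Phi(s) \;=\; \frac{\sum_i s_iM\,T(t_i)+O(1)}{\sum_i s_iM+O(1)} \;\xrightarrow[M\to\infty]{}\; \sigma(s)\coloneqq\sum_i s_i T(t_i),
\end{equation*}
uniformly in $s\in\Delta$, where the $O(1)$ terms absorb the bulk contribution of $T$ away from the detours and the $O(r)$ discrepancy between the centroid of each detour and the anchor point $T(t_i)$. The model $\sigma\colon\Delta\to\R^3$ is an affine homeomorphism onto the tetrahedron $\conv\{T(t_i)\}$, which contains $x_0$ in its interior; hence the Brouwer degree $\deg(\sigma,\Omega,x_0)=\pm 1$ on a small open $\Omega\subset\Delta$ surrounding $\sigma^{-1}(x_0)$. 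For $M$ large enough, $\Phi$ is close enough to $\sigma$ in $\C^0$-norm on $\Delta$ that $\deg(\Phi,\Omega,x_0)=\pm 1$ as well by homotopy invariance, forcing $x_0\in\Phi(\Omega)$; any $s^*\in\Omega$ with $\Phi(s^*)=x_0$ then yields the desired $\tilde T$ as the reparametrization of $T_{s^*}$. The principal obstacle is the explicit construction of the detours $D_{i,s_i}$: realizing any prescribed $\sqrt{k}$-arc-length inside an arbitrarily small spherical disk, with $k>0$ maintained through the splices and with continuous dependence on $s_i$. I expect this to be the content of the paper's Lemma~\ref{lem:F}.
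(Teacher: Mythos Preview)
Your approach is essentially the paper's: pick anchor points $v_i\in T(I)$ whose convex hull contains $x_0$ in its interior, splice in weighted loops at the $v_i$, reparametrize via Lemma~\ref{lem:reparam}, and use a degree argument to hit $x_0$. Your explicit formula $\ave_I(\tilde T_0)=\int T_0\sqrt{k_0}\,ds\big/\int\sqrt{k_0}\,ds$ is a clean streamlining the paper does not state; the paper instead estimates $\ave_I(\tilde T_x)$ by splitting $I$ into loop and non-loop subintervals and bounding each contribution separately.

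Two points need correction. First, your appeal to Carath\'eodory for \emph{four} affinely independent points with $x_0$ in the interior of their tetrahedron is not justified; Steinitz's theorem only guarantees $n\leq 6$ such points in $\R^3$, and that is what the paper uses. To keep the parameter space three-dimensional (so that the degree argument applies directly), the paper does not parametrize by barycentric weights $s\in\Delta^{n-1}$ but by target points $x$ in a ball $B\subset\inte(\conv\{v_i\})$, invoking a theorem of Kalman for a \emph{continuous} selection $x\mapsto(\lambda_1(x),\dots,\lambda_n(x))$ of barycentric coordinates; the loop lengths are then $\ell_i(x)=\lambda_i(x)(\tilde L-L)$ and one shows $|F(x)-x|<R$ for $\tilde L$ large. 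Second, your guess about Lemma~\ref{lem:F} is off: it is not a detour-construction lemma but exactly the degree-theoretic fact you already invoke (if $|F(x)-x|<R$ on $\partial B$ then $x_0\in F(B)$). The paper's detours are simply small geodesic circles $C_i$ tangent to $T$ at $v_i$ on its convex side (so $k>0$ is preserved through the splice after a small perturbation of $T$), traversed an integer number of times, together with a continuous nested family $C_{i,\ell}$ of shorter sub-loops to absorb the remainder length; this handles the discreteness of the turning number that your spiral sketch leaves implicit.
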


Proposition \ref{prop:T} implies Proposition \ref{prop:loc} as follows.  After a perturbation of $f$ on a compact set in the interior of $I$  we may assume that $T$ is nonflat. Let $x_0\coloneqq \ave _I(T)$, $\tilde T$ be the corresponding curve given by Proposition \ref{prop:T}, and $\tilde f$ be given by \eqref{eq:tildef}. Then  $\tilde f'=\tilde T$. So $|\tilde f'|=1$, $\tilde\kappa=|\tilde T'|>0$, and $\tilde\tau=\tilde k\tilde v=c$ as desired. Finally,  $\int_I T=|I|\ave_I (T)=|I|\ave_I(\tilde T)=\int_I \tilde T$ which ensures that $f=\tilde f$ on $\partial I$ and completes the argument.

\section{Controlling the Average}\label{sec:proof}
Here we establish Proposition \ref{prop:T}, which completes the proof of Theorem \ref{thm:main} as discussed above. First we record the
following basic fact. 

\begin{lemma}\label{lem:F}
Let $B\subset\R^n$ be a ball of radius $R$ centered at $x_0$, and $F\colon B \to \mathbf{R}^{n}$ be a continuous map. If $| F(x)-x| <R$ for all $x\in \partial B$, then $x_0 \in F(B)$.
\end{lemma}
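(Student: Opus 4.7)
The plan is to apply the Brouwer degree to a straight-line homotopy between a translate of the identity and a translate of $F$. Specifically, I would consider
\[
H_t(x) \coloneqq (1-t)(x - x_0) + t(F(x) - x_0), \qquad (x,t) \in B \times [0,1],
\]
with the aim of showing $\deg(H_1, \inte(B), 0) = 1$, which forces $H_1^{-1}(0) \cap \inte(B) \neq \emptyset$ and hence $x_0 \in F(B)$.

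The first step is the boundary check: verify that $H_t(x) \neq 0$ for all $x \in \partial B$ and $t \in [0,1]$. Rewriting $H_t(x) = (x - x_0) - t(x - F(x))$ and using $|x-x_0|=R$ together with the hypothesis $|x - F(x)| < R$ on $\partial B$, the reverse triangle inequality gives $|H_t(x)| \geq R - t|x - F(x)| > R(1-t)$, which is positive for $t \in [0,1)$, while at $t = 1$ it yields $|H_1(x)| \geq R - |x - F(x)| > 0$.

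The second step is to read off $\deg(H_0, \inte(B), 0) = 1$ from the fact that $H_0$ is a translation of the identity whose unique zero $x_0$ lies at the center of $B$. Homotopy invariance of the degree under boundary-nonvanishing deformations then transports this to $\deg(H_1, \inte(B), 0) = 1$, producing a solution to $F(x) = x_0$ in $\inte(B)$.

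There is no serious obstacle here: the lemma is a standard and short corollary of degree theory (it can equivalently be recast as a Brouwer fixed-point statement for the map $x \mapsto x + x_0 - F(x)$), and the only real content is the boundary estimate above, which is immediate from the strict inequality $|F(x) - x| < R$ on $\partial B$.
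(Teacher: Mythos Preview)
Your argument is correct and follows essentially the same approach as the paper: both use the straight-line homotopy $(1-t)x + tF(x)$ (up to the translation by $x_0$), check that it avoids $x_0$ on $\partial B$ via the hypothesis $|F(x)-x|<R$, and invoke homotopy invariance of degree. The only cosmetic difference is that you work with the Brouwer degree $\deg(H_1,\inte(B),0)$, while the paper phrases it via the mapping degree of the normalized boundary map $\partial B\to\partial B$ and the extension obstruction; these are equivalent standard formulations.
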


\begin{proof}
For $t\in[0,1]$, let $F_{t}(x) \coloneqq   (1-t)x + t F(x)$, and set $f_{t}\coloneqq   F_{t}\rvert_{\partial B}$. Since $| F(x)-x| <R$, for $x\in\partial B$, $x_0\not\in f_{t}(\partial B)$. So $\tilde{f} _{t}\coloneqq    f_{t}/| f_{t}|\colon \partial B \to \partial B$  is well-defined. Since $\tilde{f}_{0}$ is the identity map, $\textup{deg}(\tilde f_0)=1$. Thus, since $\tilde{f}_{t}$ is a homotopy, $\textup{deg}(\tilde f_1)=1$. But $\tilde f_1=F/|F|$. So if $x_0 \notin F(B)$, $\tilde f_1$ may be extended to $B$, which implies that it is homotopic to a constant map; therefore $\textup{deg}(\tilde f_1)=0$, which is a contradiction.
\end{proof}

Now we prove Proposition \ref{prop:T}. By Steinitz's theorem \cite[Thm.~1.3.10]{schneider2014}, there is a minimal set of points $v_i \in T$, $i=1,\dots, N\leq 6$, such that $x_0\in\inte(\conv(\{v_i \}))$. Let $B\subset\inte(\conv(\{v_i \}))$ be a ball of radius $R$ centered at $x_0$. Then, by Carath\'{e}odory's theorem \cite[Thm.~1.1.4]{schneider2014}, for each $x\in B$ there are constants $\lambda_i(x)>0$, with $\sum_i\lambda_i(x)=1$, such that 
$
x=\sum_i \lambda_i(x)v_i.
$
 By a theorem of Kalman \cite{kalman1961}, we may assume that $\lambda_i\colon B\to\R$ are continuous. Then $\ol\lambda:=\min_B\{\lambda_i\}>0$.

Let $C_i\subset V$ be circles of radius $r<R/(4N)$ which are tangent to $T$ at $v_i $, and lie on the convex side of $T$.  Let $\tilde k_C$ be the geodesic curvature of $C_i$, which depends only on $r$. Choose $r$ so small that $\tilde k_C\geq \max_I(k)$. Then, after a perturbation of $T$, we may assume that a neighborhood of $v_i $ in $T$ lies on $C_i$, see Figure \ref{fig:loop}. 
\begin{figure}[h]
\begin{overpic}[height=1in]{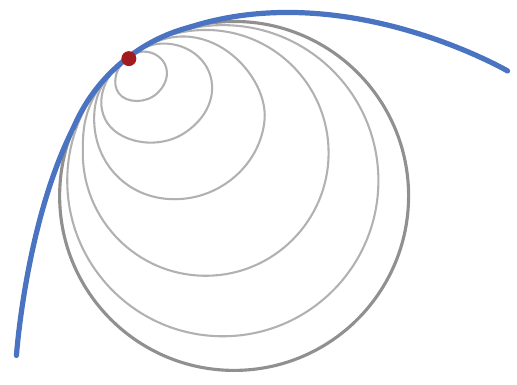}
\put(-6.5,5){\small $T$}
\put(15,65){\small $v_i$}
\put(76,15){\small $C_i$}
\put(30,25){\small $C_{i,\ell}$}
\end{overpic}
\caption{}\label{fig:loop}
\end{figure}
Let $C_{i,\ell}$ be a continuous family of nested $\C^\alpha$ curves of length $0<\ell<\length(C_i)$ and nondecreasing curvature which contain an open neighborhood of $v_i $ in $T$ and shrink to $v_i $ as $\ell\to 0$. Then for any $\ell_i>0$ we can construct a unique composite loop of length $\ell_i$ at $v_i $ as follows. Let $m$ be the largest integer with $m\length(C_i)\leq \ell_i$, and set $\ell:= \ell_i - m\length(C_i)$. Then make $m$ full laps around $C_i$ followed by one lap around $C_{i,\ell}$.

Set $L\coloneqq   \length(T)$, and choose $\tilde L>L$. For $x\in B$, let $\ol T_x\in\Imm^\alpha(I,\S^2)$ be the constant speed curve of length $\tilde L$ which traces the image of $T$ plus loops of length 
$$
\ell_i(x)\coloneqq   \lambda_i(x)(\tilde L-L)
$$ 
at $v_i $. Then $x\mapsto \ol T_x$ is continuous with respect to the $\C^0$-norm on $\C^0(I,\S^2)$. Next let $\tilde T_x$ be the reparametrizations of $\ol T_x$ given by Lemma \ref{lem:reparam}, and note that $\ol T_x\mapsto\tilde T_x$ is continuous with respect to the $\C^0$-norm. Thus the mapping
$$
B\;\ni x\;\xmapsto{\;\;\;\;\;F\;\;\;\;}\,\ave_I(\tilde T_x)\,\in\,\R^3
$$ 
is continuous. We may assume that $|I|=1$. Then by \eqref{eq:c} and since $\tilde k_C\geq \max_I(k)$,
$$
c=\tilde L/\ave_{I}\big(\,1/\tilde k_x\big)\geq \tilde L\,\textup{min}_I(\tilde k_x)=\tilde L\,\textup{min}_I(k).
$$
So $c\to\infty$  as $\tilde L\to\infty$. We will show that if $\tilde L$ is sufficiently large, then $|F(x)-x|<R$, which will complete the proof by Lemma \ref{lem:F}.

Let $\tilde T^i_x$ be the part of $\tilde T_x$ which forms the loop at $v_i $, and $I^i_x\subset I$ be the subinterval such that $\tilde T^i_x=\tilde T|_{I^i_x}$. Set $I'_x\coloneqq   I-\cup_i  I^i_x$. Then
$$
F(x)
=
\sum_i |I^i_x| \ave_{I^i_x}(\tilde T^i_x)
+
|I'_x|\ave_{I'_x}(\tilde T_x).
$$
Since $\ave_{I^i_x}(\tilde T^i_x)\in\textup{conv}(\tilde T^i_x)\subset\conv(D_i)$, where $D_i\subset\S^2$ is the small disk bounded by $C_i$, we have $\lvert\ave_{I^i_x}(\tilde T^i_x)-v_i \rvert\leq 2r<R/(2N)$. So subtracting $\sum_i |I^i_x| v_i $ from both sides of the above equation yields
\begin{equation}\label{eq:F}
\Big|\,F(x)-\sum_i |I^i_x| v_i \,\Big|
<
\frac{R}{2}
+|I'_x| \big\lvert\ave_{I'_x}(\tilde T_x)\big\rvert.
\end{equation}
Now let $\tilde L\to\infty$.
Note that 
$
|I'_x|=L/\ave_{I'_x}(c/\tilde k_x),
$
and $\min_{I'_x}(\tilde  k_x)=\min_I(k)>0$, since $\tilde T_x(I'_x)=T(I)$.
So $|I'_x|\to 0$.  But, since $\ave_{I'_x}(\tilde T_x)\in\conv(T)$, $\lvert\ave_{I'_x}(\tilde T_x)\rvert$ is  bounded above.
So the right hand side of \eqref{eq:F} converges uniformly to $R/2$.  Next note that
$$
|I^i_x|=\frac{\ell_i(x)}{c\,\ave_{I^i_x}\big(\,1/\tilde k_x\big)}=\frac{\lambda_i(x)(\tilde{L}-L)\ave_{I}\big(\,1/\tilde k_x\big)}{\tilde L\,\ave_{I^i_x}\big(\, 1/\tilde k_x\big)}.
$$
We have $\ell_i\to\infty$  since $\lambda_i\geq\ol\lambda>0$.
So $\ave_{I^i_x}(1/\tilde k_x)\to 1/\tilde k_C$. Since $|I'_x|\to 0$, $\ave_{I}(1/\tilde k_x)\to 1/\tilde k_C$ as well.
Thus $|I^i_x|\to \lambda_i(x)$.
 So the left hand side of \eqref{eq:F} converges uniformly to
$
|F(x)-x|,
$
 which completes the proof.

\bibliographystyle{amsplain}
\bibliography{references}
\end{document}